\def\bR{\mathbb{R}}
\def\bE{\mathbb{E}}
\def\bP{\mathbb{P}}
\def\cN{\mathcal{N}}
\def\wh{\widehat}
\def\be{\begin{equation}}
\def\ee{\end{equation}}
\def\rhs{r.h.s.\ }
\def\st{s.t.\ }
\newtheorem{theorem}{Theorem}   
\begin{document}
\title{On the Replica Symmetry of a Variant of the Sherrington-Kirkpatrick Spin Glass}
\author{Christian Brennecke\thanks{Institute for Applied Mathematics, University of Bonn, Endenicher Allee 60, 53115 Bonn, Germany} 
\and Adrien Schertzer\footnotemark[1]}
\maketitle
\begin{abstract}
We consider $N$ i.i.d.\ Ising spins with mean $m\in (-1,1)$ whose interactions are described by a Sherrington-Kirkpatrick Hamiltonian with a quartic correction. This model was recently introduced by Bolthausen in \cite{Bolt2} as a toy model to understand whether a second moment argument can be used to derive the replica symmetric formula in the full high temperature regime if $m\neq 0$. In \cite{Bolt2}, Bolthausen suggested that a natural analogue of the de Almeida-Thouless condition for the toy model is 
		\be\label{eq:conj} \beta^2(1-m^2)^2\leq 1. \ee
Here, $\beta \geq 0$ corresponds to the inverse temperature. While the second moment method implies replica symmetry for $\beta $ sufficiently small, Bolthausen showed that the method fails to prove replica symmetry in the full region described by \eqref{eq:conj}. A natural question that was left open in \cite{Bolt2} is whether \eqref{eq:conj} correctly characterizes the high temperature phase of the toy model. In this note, we show that this is indeed not the case. We prove that if $|m| \geq m_*$, for some $m_* \in (0,1)$, the limiting free energy of the toy model is negative for suitable $\beta $ that satisfy \eqref{eq:conj}.
\end{abstract}

\section{Introduction and Main Result} 
For $ \beta \geq 0$ and $m\in (-1,1)$, consider the mean field spin glass with partition function 
		\be \label{eq:model} Z_{N,\beta,m} := \sum_{\sigma\in \{-1,1\}^N}  p_m(\sigma) \exp \bigg(  \sum_{1\leq i , j\leq N} \frac{\beta}{\sqrt{2}} g_{ij} \wh \sigma_{i} \wh \sigma_j - \frac{\beta^2}{4N}  \sum_{1\leq i,j\leq N} \wh \sigma_{i}^2\wh \sigma_{j}^2\bigg). \ee
Here, $ \sigma = (\sigma_1, \ldots, \sigma_N)\in \{-1,1\}^N, \wh\sigma_i:= \sigma_i - m$, $p_m(\sigma) :=  \prod_{i=1}^N \frac12 (1+m\sigma_i)  $ and the couplings $ (g_{ij})_{1\leq i,j\leq N} $ are i.i.d.\ Gaussian random variables with distribution $\cN(0,N^{-1})$.   

The model in \eqref{eq:model} was recently introduced by Bolthausen in \cite{Bolt2}. The main result of \cite{Bolt2} is a new proof of the replica symmetric (RS) formula for the classical Sherrington-Kirkpatrick (SK) model \cite{SK} with external field strength $h\in\bR$ and for sufficiently small $\beta\geq 0$. The proof is based on a second moment argument, conditionally on an approximate solution of the TAP equations \cite{TAP} that was constructed in \cite{Bolt1}. The RS formula is expected to be true under the de Almeida-Thouless \cite{AT} condition 
		\be \label{eq:ATline}   \beta^2 E (1- m_{\beta,h}^2)^2 \leq 1, \; \text{ for }\; m_{\beta,h}=\text{tanh}(h+\beta\sqrt{q}Z), \;q= E \, \text{tanh}^2(h+\beta\sqrt{q}Z), \ee
where $Z$ is a standard Gaussian random variable. To date, this was verified in a very large subregion of the phase diagram in \cite{Ton, Tal1, Tal2, JagTob}, based on the analysis of the Parisi formula \cite{Par1, Par2} proved in \cite{Gue, Tal}. A complete proof that the RS phase is fully characterized by \eqref{eq:ATline} still remains open, however. Note for a centered Gaussian external field, the RS phase indeed turns out to be characterized by an analogue of \eqref{eq:ATline}, see \cite{Ch}.
 
It is a natural question whether a second moment argument as in \cite{Bolt2} can be improved to verify the replica symmetry in the expected region \eqref{eq:ATline}. Although the temperature range from \cite{Bolt2} can be improved through basic restriction arguments \cite{BY}, Bolthausen already suggested in \cite{Bolt2} that understanding the full RS phase likely requires additional techniques. His argument is based on the analysis of the toy model defined in \eqref{eq:model}. For this model, the second moment method can be evaluated explicitly and it is straightforward to show that for $\beta \geq 0$ small enough, one has that 
		\be \lim_{N\to\infty} \frac1N   \log  \bE Z_{N,\beta,m}  = \lim_{N\to\infty} \frac1N  \log \bE Z_{N,\beta,m}^2 =0.  \ee
By Gaussian concentration, see e.g.\ \cite[Th. 1.3.4.]{Tal1}, this implies for small $\beta $ that 
		\be \label{eq:1} \lim_{N\to\infty} \bE  \frac1N   \log  Z_{N,\beta,m} =0. \ee
By analogy to \eqref{eq:ATline}, Bolthausen then suggested that the high temperature region 
		\be \label{eq:setRSm}\text{HT}_m:= \Big\{\beta \geq 0: \lim_{N\to\infty} \bE  \frac1N   \log   Z_{N,\beta,m} = 0 \Big\}   \ee
the set in which the quenched equals the annealed free energy, should correspond to  
 		\be \label{eq:setATm} \text{AT}_m:= \big \{\beta \geq 0:  \beta^2(1-m^2)^2\leq 1\big \}  \ee
and it was left open whether \eqref{eq:1} indeed holds true for all $\beta \in \text{AT}_m$ (cf.\ \cite[Eq. (6.1)]{Bolt2}). 

Note that one might expect that $\text{AT}_m \subset \text{HT}_m $ based on different arguments. For example, an application of the TAP heuristics suggests that $\textbf{m} = (m, m, \ldots, m)\in (-1,1)^N$ solves an analogue of the TAP equations associated with \eqref{eq:model}. Alternatively, it is not difficult to see that, for $\beta$ sufficiently small, the free energy fluctuations converge to a normal variable with a variance that contains the term $\log \big(1- \beta^2(1-m^2)^2\big) $. This indicates a phase transition when $\beta^2(1-m^2)^2\to 1$, similarly as in the SK model without external field analyzed in detail in \cite{ALR} (in which case the corresponding divergence at $\beta \to 1$ detects the RS-RSB transition temperature correctly). 	

It turns out\footnote{We thank W.-K.\ Chen for this remark and for pointing out \cite{CHL} to us. We thank W.-K.\ Chen and S. Tang for sharing some related numerical simulations on \eqref{eq:setRSm} and \eqref{eq:setATm} with us.} that a precise characterization of $\text{HT}_m$ may be obtained by adapting the methods from \cite{CHL} to the present setting. Related numerical simulations suggest that for large $|m|$, one can find $\beta \in  \text{AT}_m$ with $\beta \not \in \text{HT}_m$. Perhaps surprisingly, this fact admits an elementary proof which does neither require precise knowledge of $ \text{HT}_m$ nor of a Parisi formula for the limiting free energy of the model. 	
\begin{theorem}\label{thm:main}  
Let $I(m):= - \frac{1+m}{2} \log \frac{1+m}{2} -  \frac{1-m}{2} \log \frac{1+m}{2}$ and define $ m_* >0$ to be such that $I(m_*) = \frac14 $. Then, for every $ |m| > m_*$, there exists $\beta \in \emph{AT}_m$ with $\beta \not \in \emph{HT}_m$. 
\end{theorem}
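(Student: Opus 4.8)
The starting observation is that $\bE\, Z_{N,\beta,m}=1$ (a one-line Gaussian computation: the quartic correction is exactly the annealed Laplace exponent of the bilinear term, and $\sum_\sigma p_m(\sigma)=1$), so $\bE\,\tfrac1N\log Z_{N,\beta,m}\le 0$ automatically and the real task is to prove \emph{strict} negativity for a suitable $\beta$ on the boundary of $\text{AT}_m$. I would deduce this from exponential decay of a fractional moment. For $\theta\in(0,1)$, using $(\sum_\sigma a_\sigma)^\theta\le\sum_\sigma a_\sigma^\theta$ together with the Laplace transforms of $\sum_{ij}g_{ij}\wh\sigma_i\wh\sigma_j\sim\cN\big(0,\tfrac1N(\sum_i\wh\sigma_i^2)^2\big)$ and of the deterministic quartic term, one gets
\be \bE\, Z_{N,\beta,m}^{\theta}\ \le\ \sum_{\sigma}p_m(\sigma)^{\theta}\exp\!\Big(-\tfrac{\theta(1-\theta)\beta^2N}{4}\,r(S_\sigma/N)^2\Big),\qquad S_\sigma:=\sum_i\sigma_i,\ \ r(x):=1+m^2-2mx. \ee
Grouping the configurations by their magnetization $x=S_\sigma/N$, noting that $r(x)=\tfrac1N\sum_i\wh\sigma_i^2$ is then constant and that $\binom{N}{(1+x)N/2}\le e^{NI(x)}$ while $p_m(\sigma)^\theta=e^{\theta N\ell(x)}$ with $\ell(x):=\tfrac{1+x}{2}\log\tfrac{1+m}{2}+\tfrac{1-x}{2}\log\tfrac{1-m}{2}$ (affine, and $\ell(m)=-I(m)$), this yields
\be \limsup_{N\to\infty}\tfrac1N\log\bE\, Z_{N,\beta,m}^{\theta}\ \le\ \max_{x\in[-1,1]}\Psi_\theta(x),\qquad \Psi_\theta(x):=I(x)+\theta\,\ell(x)-\tfrac{\theta(1-\theta)\beta^2}{4}\,r(x)^2. \ee

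Now fix $\beta:=(1-m^2)^{-1}$, so that $\beta^2(1-m^2)^2=1$ and $\beta\in\text{AT}_m$. Each $\Psi_\theta$ is strictly concave on $[-1,1]$ (since $I$ is strictly concave, $\ell$ is affine, and $-r^2$ is concave because $r$ is affine), so $\phi(\theta):=\max_x\Psi_\theta(x)$ is attained at a unique point $x^\dagger(\theta)$; at $\theta=1$ the penalty vanishes, $\Psi_1=I+\ell$ is maximized at $x^\dagger(1)=m$, and $\phi(1)=I(m)+\ell(m)=0$. Since $\partial_{xx}\Psi_1(m)=I''(m)\ne0$, the map $\theta\mapsto x^\dagger(\theta)$ is $C^1$ near $\theta=1$, hence so is $\phi$, and the envelope theorem gives
\be \phi'(1)\ =\ \partial_\theta\Psi_\theta(m)\big|_{\theta=1}\ =\ \ell(m)+\tfrac{\beta^2}{4}\,r(m)^2\ =\ -I(m)+\tfrac{\beta^2(1-m^2)^2}{4}\ =\ \tfrac14-I(m). \ee
Therefore, when $|m|>m_*$ — equivalently $I(m)<I(m_*)=\tfrac14$, since $I$ is even and decreasing on $[0,1]$ — we have $\phi'(1)>0$, so $\phi(\theta_0)<\phi(1)=0$ for some $\theta_0\in(0,1)$; writing $\eta:=-\phi(\theta_0)>0$, this gives $\bE\, Z_{N,\beta,m}^{\theta_0}\le e^{-\eta N/2}$ for all large $N$.

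It remains to transfer this to the quenched free energy. Markov's inequality gives $\bP\big(\tfrac1N\log Z_{N,\beta,m}>-\tfrac{\eta}{4}\big)\le e^{-cN}$ for some $c>0$, while $g\mapsto\log Z_{N,\beta,m}$ is Lipschitz in the disorder ($|\nabla_g\log Z_{N,\beta,m}|^2=\tfrac{\beta^2}{2}\sum_{ij}\langle\wh\sigma_i\wh\sigma_j\rangle^2\le\tfrac{\beta^2}{2}(1+|m|)^4N^2$, with $\langle\cdot\rangle$ the Gibbs average), so $\tfrac1N\log Z_{N,\beta,m}$ concentrates around its mean on scale $N^{-1/2}$ by Gaussian concentration (e.g.\ \cite[Th.\ 1.3.4]{Tal1}); were $\bE\,\tfrac1N\log Z_{N,\beta,m}>-\tfrac{\eta}{8}$ to hold along a subsequence, concentration would force $\bP(\tfrac1N\log Z_{N,\beta,m}>-\tfrac{\eta}{4})\ge\tfrac12$ there, contradicting the Markov bound. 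Hence $\limsup_{N\to\infty}\bE\,\tfrac1N\log Z_{N,\beta,m}\le-\tfrac{\eta}{8}<0$, so $\beta=(1-m^2)^{-1}\in\text{AT}_m\setminus\text{HT}_m$. The crux — and the origin of the constant $\tfrac14$ in the definition of $m_*$ — is the identity $\phi'(1)=\tfrac14-I(m)$: the negative quartic term acts at full strength $r(m)^2=(1-m^2)^2$ exactly at the maximizer $x=m$ of the $\theta=1$ rate function, which is what gets matched against the AT constraint $\beta^2(1-m^2)^2\le1$; the differentiability of $\phi$ at $\theta=1$ and the harmless contribution of rare large-disorder events to $\bE\,\tfrac1N\log Z_{N,\beta,m}$ are routine.
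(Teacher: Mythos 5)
Your argument is correct, and it reaches the theorem by a genuinely different route than the paper. The paper never touches fractional moments: it restricts the partition function to the typical-magnetization set $S_{N,m,\epsilon}$, shows (the claim \eqref{eq:claim1}) that for $\beta\in\text{HT}_m$ this restriction cannot change the limiting free energy, then differentiates the restricted quenched free energy $F^{\le\epsilon}_{N,\beta,m}$ in $\beta$, bounds the derivative by the expected maximum of the Gaussian field over $S_{N,m,\epsilon}$ (via a Jensen/annealed bound with an optimized tilt $t$), and integrates in $\beta$ to get \eqref{eq:lwrbnd}, i.e.\ strict negativity of the restricted free energy for $\beta>\beta_c=2(1-m^2)^{-1}\sqrt{I(m)}$ — contradicting \eqref{eq:RScond}. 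You instead bound $\bE Z^{\theta}$ for $\theta<1$ by subadditivity plus the exact Gaussian Laplace transform, obtain the variational exponent $\phi(\theta)=\max_x\Psi_\theta(x)$ with $\phi(1)=0$ and $\phi'(1)=\tfrac{\beta^2(1-m^2)^2}{4}-I(m)$, and transfer $\phi(\theta_0)<0$ to the quenched free energy by Markov plus Gaussian concentration; note that your criterion $\beta^2(1-m^2)^2>4I(m)$ is exactly the paper's $\beta>\beta_c$, so the two methods are equally sharp here. What your route buys: it works directly on the unrestricted $Z_{N,\beta,m}$, so the restriction lemma \eqref{eq:claim1} and the integration in $\beta$ disappear, and the entropy--energy competition is packaged into the single scalar function $\phi$; the mild prices are the differentiability-of-the-max step at $\theta=1$ (your IFT/envelope argument is fine since the $\theta=1$ maximizer $x=m$ is interior, unique and nondegenerate, and strict concavity rules out boundary maximizers — though one can bypass Danskin entirely by a direct two-scale estimate: away from $m$ one has $\Psi_1\le -c_\delta$ with $O(1-\theta)$ corrections, near $m$ one has $\Psi_\theta\le -(1-\theta)\big(\tfrac{\beta^2}{4}r(x)^2+\ell(x)\big)+o(1-\theta)$), and a small normalization slip in the Lipschitz bound for the concentration step ($g_{ij}$ has variance $1/N$, so the gradient with respect to standardized Gaussians carries an extra $N^{-1/2}$; the conclusion, concentration of $\tfrac1N\log Z_{N,\beta,m}$ at scale $N^{-1/2}$, is the standard fact the paper also invokes). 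The paper's route, by contrast, avoids fractional moments altogether and makes visible that the mechanism is an energy--entropy comparison on the fixed-magnetization shell, at the cost of the additional restriction and $\beta$-integration bookkeeping.
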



\section{Proof of Theorem \ref{thm:main}} \label{sec:proof}
Our proof is elementary and adapts an argument from the analysis of the random energy model, see \cite[p.\ 167]{Bov}, to the present setting. By symmetry, we assume in the rest of this section without loss of generality that $m>0$.  

\begin{proof}[Proof of Theorem \ref{thm:main}.] 
In the following, denote by $E_m$ the expectation with regards to the base measure $p_m$ and, for $\epsilon>0$, define the set
		\[ S_{N, m, \epsilon}:=\Big \{\sigma \in \{-1,1\}^N:   \frac 1N\sum_{i=1}^N \sigma_i \in [m-\epsilon,m+\epsilon]  \Big\}. \]
Moreover, denote by $ H_{N,\beta,m}$ the Hamiltonian
		\[H_{N,\beta,m}(\sigma) :=  \sum_{1\leq i , j\leq N} \frac{\beta}{\sqrt{2}} g_{ij} \wh \sigma_{i} \wh \sigma_j - \frac{\beta^2}{4N}  \sum_{1\leq i,j\leq N} \wh \sigma_{i}^2\wh \sigma_{j}^2 \]
so that $ Z_{N,\beta,m} = E_m \exp( H_{N,\beta,m})$. 

We first claim that for every $\beta \in  \text{HT}_m$ and every $\epsilon>0$, we have that
		\be\label{eq:claim1} \lim_{N\to\infty} \Big| \frac{1}{N} \log(Z_{N,\beta,m}) -  \frac{1}{N} \log E_{m}\left( \textbf{1}_{S_{N, m, \epsilon}}  \exp (H_{N,\beta,m})\right)\Big| =0 \ee
in the sense of probability. Here, $ \textbf{1}_{S}:\{-1,1\}^N\to \{0,1\} $ denotes the characteristic function of the subset $S \subset\{-1,1\}^N$. To prove \eqref{eq:claim1}, set 
		\[ Z_{N,\beta,m}^{ \leq \epsilon}:=E_{m} ( \textbf{1}_{S_{N, m, \epsilon}} \exp  H_{N,\beta,m} ), \; Z_{N,\beta,m}^{>\epsilon }:=E_{m}( \textbf{1}_{S_{N, m, \epsilon}^{\complement}} \exp H_{N,\beta,m} ) \]
so that $ Z_{N,\beta,m} = Z_{N,\beta,m}^{ \leq \epsilon} + Z_{N,\beta,m}^{ >\epsilon} $. Then, for every $\delta>0$, we have that
		\be\label{eq:m1}
		\begin{split}
		& \bP \Big(\Big| \frac{1}{N} \log(Z_{N,\beta,m})-\frac{1}{N}\log\big(Z_{N,\beta,m}^{ \leq \epsilon}\big)\Big|\geq \delta\Big)\\
		& = \bP \Big(   Z_{N,\beta,m}^{>\epsilon }/Z_{N,\beta,m}^{\leq \epsilon }  \geq  e^{\delta N} -1, \; Z_{N,\beta,m}^{ \leq \epsilon} \leq e^{-\delta N} \Big) \\
		&\hspace{0.4cm}+ \bP \Big(   Z_{N,\beta,m}^{>\epsilon }/Z_{N,\beta,m}^{\leq \epsilon }  \geq  e^{\delta N} -1, \; Z_{N,\beta,m}^{ \leq \epsilon} > e^{-\delta N} \Big)\\
		&\leq \bP \Big(    Z_{N,\beta,m}^{ \leq \epsilon} \leq e^{-\delta N} \Big) + \bP \Big(   Z_{N,\beta,m}^{>\epsilon }   \geq  1- e^{-\delta N}  \Big)\\
		&\leq \bP \Big(    Z_{N,\beta,m}^{ \leq \epsilon} \leq e^{-\delta N} \Big) +   \big(1- e^{-\delta N}\big)^{-1} p_m \big(  S_{N, m, \epsilon}^{\complement}\big), 
		\end{split}
		\ee
where in the last step, we applied Markov's inequality. Similarly, we have that 
 		\[\begin{split}
		&\bP \Big(    Z_{N,\beta,m}^{ \leq \epsilon} \leq e^{-\delta N} \Big) \\
		&\leq   \bP \Big( Z_{N,\beta,m}^{ \leq \epsilon} \leq e^{-\delta N}, \;  Z_{N,\beta,m}^{ >\epsilon} \leq e^{-\delta N} \Big) + \bP \Big(    Z_{N,\beta,m}^{ >\epsilon} >e^{-\delta N} \Big) \\
		& \leq  \bP \Big( \frac1N \log Z_{N,\beta,m}  \leq  - \frac\delta2  \Big) +  e^{\delta N} p_m \big(  S_{N, m, \epsilon}^{\complement}\big)
		\end{split}\]
for $N$ large enough so that 
		\[ \begin{split}
		&\bP \Big(\Big| \frac{1}{N} \log(Z_{N,\beta,m})-\frac{1}{N}\log\big(Z_{N,\beta,m}^{ \leq \epsilon}\big)\Big|\geq \delta\Big)\\
		& \leq  \bP \Big( \Big | \frac1N \log Z_{N,\beta,m} \Big|  \geq \frac\delta2  \Big) + \frac{e^{\delta N }}{1- e^{-\delta N}} \, p_m \big(  S_{N, m, \epsilon}^{\complement}\big).
		\end{split}\]	
Markov's inequality and Gaussian concentration (see \cite[Th. 1.3.4.]{Tal1}) imply that
		\[\begin{split}
		 \bP \Big( \Big | \frac1N \log Z_{N,\beta,m} \Big|  \geq \frac\delta2  \Big) &\leq   \bP \Big( \Big | \frac1N \log Z_{N,\beta,m} - \bE \frac1N \log Z_{N,\beta,m} \Big|  \geq \frac\delta4  \Big) \!+ \frac{4}{\delta}  \Big|   \bE \frac1N \log Z_{N,\beta,m} \Big| \\
		&\leq   C e^{-cN\delta^{2}/\beta^{2}}  + \frac{4}{\delta} \Big|   \bE \frac1N \log Z_{N,\beta,m} \Big| 
		\end{split}\]
for suitable constants $C, c>0$ (independent of all parameters). 
On the other hand, setting $ h := \tanh^{-1}(m)$, a standard exponential moment bound shows that  
		\[\begin{split}
		p_m \big( S_{N, m, \epsilon}^{\complement}\big) & \leq \inf_{\lambda  \geq 0}   \exp \big( -N\epsilon \lambda - N m \lambda + N (\log\cosh(h+\lambda) -\log\cosh(h)  \big) \\
		&\hspace{0.5cm} + \inf_{\lambda  \geq 0}  \exp \big( -N\epsilon \lambda + N m \lambda + N (\log\cosh(h-\lambda) -\log\cosh(h)   \big)\\
		&\leq 2  \inf_{\lambda  \geq 0}   \exp \big( -N\epsilon \lambda + N \lambda^2/2\big)  = 2 \exp ( - N\epsilon^2/2  ).
		\end{split}\]
Now, for every $  \delta'>0$, we can find $ 0<\delta< \min (\delta', \epsilon^2/4)$ so that we arrive at
		\[ \begin{split}
		&\limsup_{N\to\infty} \bP \Big(\Big| \frac{1}{N} \log(Z_{N,\beta,m})-\frac{1}{N}\log\big(Z_{N,\beta,m}^{ \leq \epsilon}\big)\Big|\geq \delta'\Big)\\
		& \leq \limsup_{N\to\infty} \bP \Big(\Big| \frac{1}{N} \log(Z_{N,\beta,m})-\frac{1}{N}\log\big(Z_{N,\beta,m}^{ \leq \epsilon}\big)\Big|\geq \delta\Big)\\
		& \leq \limsup_{N\to\infty}\bigg(  C e^{-cN\delta^{2}/\beta^{2}} +  C e^{  -N\epsilon^2/4 }   +  \frac{4}{\delta}  \Big|   \bE \frac1N \log Z_{N,\beta,m} \Big|\bigg) \!= \frac{4}{\delta} \limsup_{N\to\infty}   \Big|   \bE \frac1N \log Z_{N,\beta,m} \Big|. 
		\end{split}\]
By definition of $\text{HT}_m$ in \eqref{eq:setRSm}, the \rhs vanishes if $ \beta \in \text{HT}_m$ and this concludes \eqref{eq:claim1}. 
		
Notice that \eqref{eq:claim1} implies that for every $\beta\in \text{HT}_m$ and every $\epsilon>0$, we have that
		\be \label{eq:RScond} \begin{split}
		\lim_{N\to\infty}  \frac1N \log Z_{N,\beta, m}^{\leq \epsilon} =  \lim_{N\to\infty}  \bE \frac1N \log Z_{N,\beta, m}^{\leq \epsilon} =0, 
		\end{split}\ee
where the first equality holds true almost surely. Indeed, this follows from \eqref{eq:claim1} combined with Gaussian concentration applied to both $ Z_{N,\beta, m}$ and $Z_{N,\beta, m}^{\leq \epsilon} $. 

We now argue that for every $m>m_*$, we can find $ \beta \in \text{AT}_m$ such that for all $\epsilon>0$ sufficiently small, it holds true that
		\be \label{eq:claim2} \begin{split}
		  \limsup_{N\to\infty}  \bE \frac{1}{N} \log Z_{N,\beta,m}^{ \leq \epsilon}<0.
		\end{split}\ee
By \eqref{eq:RScond}, this implies $ \beta \not\in \text{HT}_m$ and it concludes that $\text{AT}_m\not \subset \text{HT}_m$ whenever $m>m_*$. 

In the remainder, we abbreviate
 		$$F_{N,\beta,m}^{ \leq \epsilon}:=\bE \frac{1}{N} \log Z_{N,\beta,m}^{ \leq \epsilon}.$$ 
Then, we have that 
		\[\begin{split}
		\frac{d}{d\beta} F_{N,\beta,m}^{ \leq \epsilon} &=\bE \,  E_{m}\bigg( \frac1N \sum_{1\leq i , j\leq N} \frac{g_{ij}}{\sqrt{2}} {\widehat \sigma_{i}}{\widehat \sigma_j}-\frac{\beta }{2}\Big(\frac{1}{N}\sum_{i=1}^{N}{\widehat \sigma_i}^2\Big)^2  \bigg)   \textbf{1}_{S_{N, m, \epsilon}}  \frac{e^{ H_{N,\beta,m} } } {Z_{N,\beta,m}^{ \leq \epsilon}} \\
		& = \bE \,  E_{m} \, \frac1N \sum_{1\leq i , j\leq N} \frac{g_{ij}}{\sqrt{2}} {\widehat \sigma_{i}}{\widehat \sigma_j}  \, \textbf{1}_{S_{N, m, \epsilon}}  \frac{e^{ H_{N,\beta,m} } } {Z_{N,\beta,m}^{ \leq \epsilon}} -\frac{\beta}{2}(1-m^2)^2+C \beta \epsilon\\
		&\leq \frac{ \bE M_{N,m,\epsilon}}{\sqrt N}-\frac{\beta}{2}(1-m^2)^2+ C \beta \epsilon.
		\end{split}\]
Here, $C>0$ is some universal constant and we defined
		\[\begin{split} 
		M_{N,m,\epsilon}:=  \max \big\{ X_\sigma :  \sigma\in S_{N, m, \epsilon} \big\} \;\;\text{ for } \;\; X_\sigma:= \frac{1}{\sqrt {2N}}  \sum_{1\leq i , j\leq N} g_{ij}  {\widehat \sigma_{i}}{\widehat \sigma_j}.
		\end{split} \]
To control the maximum $ M_{N,m,\epsilon}$, we apply Jensen's inequality so that for $t>0$, we get
		\[\begin{split}
		 \frac{ \bE M_{N,m,\epsilon}}{\sqrt N}& \leq \frac1{Nt} \bE \log \exp\big (\sqrt{N} tM_{N,m,\epsilon}\big)\\
		&\leq  \frac1{Nt} \log \bE\exp\big (\sqrt{N} t M_{N,m,\epsilon}\big)\\
		&\leq  \frac1{Nt} \log   \bE \sum_{\sigma \in S_{N, m, \epsilon}} \exp\big (\sqrt{N} tX_\sigma \big)  =  \frac1{Nt}  \log  \sum_{\sigma \in S_{N, m, \epsilon}}  \exp\Big (\frac{N t^2 }{2}  \bE X_\sigma^2 \Big).
		\end{split}	\]
Now, recall that for every $\sigma \in S_{N, m, \epsilon}$, we have that
		\[ \bE X_\sigma^2  =  \frac12( 1+m^2- 2m s_\sigma )^2 \;\text{ for some } \; s_\sigma \in \Delta_N \cap [m-\epsilon, m+\epsilon], \]
where $\Delta_N:=\big\{ -1  +2k/N :  k = 0, 1,\ldots, N  \big\}$. Moreover, by Stirling's approximation, we have for every $s\in \Delta_N \cap [m-\epsilon, m+\epsilon]$ (and $\epsilon>0$ small enough) that
		\[  \big| \big\{ \sigma \in \{-1,1\}^N: s_\sigma  = s    \big\} \big| = \binom{N}{  \frac{1+s}{2} N } \leq C \exp\big(N   I(s)  \big)     \]
for $N$ large enough, a universal constant $C>0$ and where we recall that $I: [-1,1]\to [0,1]$ denotes the binary entropy function, defined by
		\[ \label{eq:bient} I(s)= - \frac{1+s}{2} \log \frac{1+s}{2}  -  \frac{1-s}{2} \log \frac{1-s}{2} . \]
Recall that $I$ is a strictly concave function, that it attains its unique maximum at $s=0$, vanishes at $ s= \pm 1$ and that it is strictly decreasing in $ [0,1]$. Using that \linebreak $ |\Delta_N \cap [m-\epsilon, m+\epsilon]|\leq  N $, we obtain for every $t>0$ the upper bound
		\[\begin{split}
		\frac{ \bE M_{N,m,\epsilon}}{\sqrt N} \leq \frac1{t} \frac{ \log (C N ) }N + \frac 1 t  \max_{s  \in  [m-\epsilon, m+\epsilon] }  \Big (\frac{t^2}{4}  ( 1+m^2- 2m s  )^2 +  I(s) \Big). 
		\end{split}\]
Choosing $ t:=  2 (1-m^2)^{-1}  \sqrt {I(m)}  $ \st in particular $t>0$ for $m\in (0,1)$, this implies
		\[\begin{split}
		\frac{ \bE M_{N,m,\epsilon}}{\sqrt N} & \leq \frac{ (1-m^2)  }{2\sqrt {I(m)}}  \frac{  \log (C N )}{N  }  \\
		&\hspace{0.5cm} +  \frac{ (1-m^2)  }{ 2\sqrt {I(m)}}  \max_{s  \in  [m-\epsilon, m+\epsilon] } \Big ( I(m) (1-m^2)^{-2} ( 1+m^2- 2m s  )^2 + I(s) \Big) \\
		&\leq  \frac{ (1-m^2)  }{ 2\sqrt {I(m)}}  \frac{  \log (C N )}{N  } +  (1-m^2)  \sqrt {I(m)} + C\epsilon 
		\end{split}\]
for some $C>0$ (that is independent of $\beta$) and for all $\epsilon>0$ small enough, by continuity. 

Collecting the previous estimates, we conclude that for every $m\in (0,1)$, every $\beta \geq 0$ and for every sufficiently small $\epsilon>0$ it holds true that
		\[\frac{d}{d\beta} F_{N,\beta,m}^{ \leq \epsilon} \leq     (1-m^2)   \sqrt {I(m)} -\frac{\beta}{2}(1-m^2)^2 + C(1+\beta)\epsilon + O(\log N/N).    \]
Integrating the last inequality in $\beta \geq 0$ and combining this with $ F_{N,\beta,m}^{ \leq \epsilon} \leq 0$ for all $\beta\geq 0$, which follows from Jensen's inequality, we obtain the upper bound
		\[
		\limsup_{\epsilon\to0}\limsup_{N\to\infty} F_{N,\beta,m}^{ \leq \epsilon}\leq \inf_{\beta_0\geq 0}
		\begin{cases} 
		0 &: \beta <\beta_0, \\
		(\beta-\beta_0) (1-m^2) \sqrt {I(m)} -\frac{\beta^2- \beta_0^2}{4}(1-m^2)^2 &: \beta \geq \beta_0. 
		\end{cases}
		\]
 Choosing $ \beta_c= 2(1-m^2)^{-1} \sqrt {I(m)} >0$ to bound the \rhs from above, we arrive at
		\be\label{eq:lwrbnd}
		\limsup_{\epsilon\to0}\limsup_{N\to\infty} F_{N,\beta,m}^{ \leq \epsilon}\leq  
		\begin{cases} 
		0 &: \beta <\beta_c, \\
		 -\frac14 (\beta-\beta_c)^2 (1-m^2)^2  &: \beta \geq \beta_c. 
		\end{cases}
		\ee		
To conclude the proof, we finally notice that \eqref{eq:lwrbnd} implies that 
		$$\limsup_{\epsilon\to0}\limsup_{N\to\infty} F_{N,\beta,m}^{ \leq \epsilon}<0 $$ 
whenever 
 		\be \beta > \beta_c \;\;\Longleftrightarrow  \;\; \beta^2 (1-m^2)^2 > 4 I(m).  \ee 
 If $ m > m_*  $, for $m_*>0$ defined by $I(m_*)=\frac14$, the monotonicity of $I$ implies that $4 I(m)< 1 $ so that, in particular, there exists some $\beta \in \text{AT}_m$ that satisfies 
 		\[  4 I(m) <  \beta^2 (1-m^2)^2 \leq 1  \;\;\Longleftrightarrow  \;\;   \beta  \in\big(  (1-m^2)^{-1} \sqrt{4I(m)} , (1-m^2)^{-1}\big]. \]
Choosing $\epsilon>0$ small enough, this implies \eqref{eq:claim2} and concludes that $\text{AT}_m \not \subset \text{HT}_m$. 
\end{proof}

\vspace{0.2cm}
\noindent\textbf{Acknowledgements.}  C. B. thanks G. Genovese for helpful discussions. C. B. and A. S. acknowledge support by the Deutsche Forschungsgemeinschaft (DFG, German Research Foundation) under Germany’s Excellence Strategy – GZ 2047/1, Projekt-ID 390685813. A.S. is also funded by the Deutsche Forschungsgemeinschaft – project number 432176920. 


\end{document}